\newtheorem{theorem}{Theorem}[section]
\newtheorem{Definition}[theorem]{Definition}
\newtheorem{Example}[theorem]{Example}
\begin{document}

\title[On The Topology of $n$-Normed Spaces ...]{On The Topology of $n$-Normed Spaces with Respect To Norms of Its Quotient Spaces}

\author[Harmanus Batkunde]{Harmanus Batkunde$^{1}$}
\address{$^{1}$Analysis and Geometry Group, Faculty of Mathematics and Natural Sciences Bandung Institute of Technology. Jl. Ganesha 10, Bandung 40132}
\email{h.batkunde@fmipa.unpatti.ac.id}

\author[Hendra Gunawan]{Hendra Gunawan$^{2}$}
\address{$^{2}$Analysis and Geometry Group, Faculty of Mathematics and Natural Sciences Bandung Institute of Technology. Jl. Ganesha 10 Bandung 40132}
\email{hgunawan@math.itb.ac.id}


\begin{abstract}
In this paper, we study some topological characteristics of the $n$-normed spaces. 
We observe convergence sequences, closed sets, and bounded sets in the $n$-normed 
spaces using norms of quotient spaces that will be constructed. These norms will 
be a new viewpoint in observing the characteristics of the $n$-normed spaces. By 
using these norms, we also review the completeness of the $n$-normed spaces.

\vspace{2mm}

\noindent\textsc{2010 Mathematics Subject Classification.} 46B20, 54B15, 46Axx, 46Bxx.

\vspace{2mm}

\noindent\textsc{Keywords and phrases.} bounded set, closed set, completeness, $n$-normed space.

\end{abstract}

\thanks{This work was supported by ITB research and inovation program 2018. ~The first author is supported by LPDP Indonesia.}


\maketitle


\section {Introduction}


In 1960's, the concept of $n$-normed spaces for $n \geq 2$ was introduced by G\"ahler \cite{5, 6, 7, 8}. 
This concept is studied and developed further by various researchers \cite{1, 2, 4, 10}. They also 
studied some characteristics of these spaces. Ekariani \textit{et al}. \cite{3} provided a contractive 
mapping theorem for the $n$-normed space of $p$-summable sequence. They proved the theorem using a 
linearly independent set consisting of $n$ vectors. Moreover, Gunawan \textit{et al}. \cite{12} proved 
fixed point theorems on bounded sets in an $n$-normed space also using a linearly independent set of $n$ 
vectors. Meitei and Singh \cite{13} studied bounded $n$-linear operators in $n$-normed spaces using similar approach. 
As we can see, most researchers usually investigate the $n$-normed spaces using a set of $n$ linear independent vectors.

Moreover, we will consider some quotient spaces of the $n$-normed spaces. We will construct these quotient 
spaces with respect to a linearly independent set. We also define a norm in each quotient space. We use 
these norms to observe some characteristics of the $n$-normed spaces. Before we present our main results, 
here are some basic concepts of the $n$-normed spaces.

	Let $n$ be a nonnegative integer and $X$ is a real vector space with $dim(X) \geq n$. An $n$-norm is a function
	$\|\cdot,\dots,\cdot\| : X^{n} \to \mathbb{R}$ which satisfies the following conditions:
	\begin{itemize}
		\item [i.] $\|x_1,\cdots,x_n\| \geq 0 ;$ \\
		$\|x_1,\cdots,x_n\| = 0$ if and only if $x_1,\cdots,x_n$ linearly dependent.
		\item [ii.] $\|x_1,\cdots,x_n\|$ is invariant under permutation.
		\item [iii.] $\|\alpha x_{1},\dots,x_{k}\|=|\alpha| \ \|x_{1},\dots,
		x_{k}\|$ for any $\alpha \in \mathbb{R}$.
		\item [iv.] $\|x_1+x_1^\prime,x_{2},\dots,x_{k}\| \leq \|x_1,x_{2},
		\dots,x_{k}\| + \|x_1^\prime,x_{2},\dots,x_{k}\|$.
	\end{itemize}
	The pair $\left(X,\|\cdot,\dots,\cdot\|\right)$
	is called an $n$-normed space \cite{10}.
	

For example, if $(X,\langle \cdot,\cdot\rangle)$ is an inner product
space, we can define the standard $n$-norm on $X$ by
\begin{equation}\label{a}
\| x_{1},\dots,x_{n}\|^{S}:= \left\vert
\begin{array}{ccc}
\langle x_{1},x_1\rangle & \cdots & \langle x_{1},x_n\rangle \\
\vdots & \ddots & \vdots \\
\langle x_{n},x_1\rangle & \cdots & \langle x_{n},x_n\rangle%
\end{array}\right\vert^{1/2}.
\end{equation}
The determinant on the right hand of equation (\ref{a}) is known as Gram's determinant.
Its value is always nonnegative.~Geometrically, the value of $\|x_1,\dots,x_n\|^{S}$ also represents the volume of the
$n$-dimensional parallelepiped spanned by $x_1,\dots,x_n$.
Moreover, if $(X,\|\cdot,\dots,\cdot\|)$ is an $n$-normed space, then
$$\|x_{1},\dots,x_{n}\|=\|x_{1}+\alpha_{2} x_{2}+\dots+\alpha_{n}x_{n},\dots,x_{n}\|,$$
for all $x_{1},\dots,x_{n}\in X$ and $\alpha_{2},\dots,\alpha_{n} \in \mathbb{R}$ \cite{10}.

Now, we present the construction of quotient spaces of the $n$-normed spaces with respect to 
a linearly independent set. Let $(X,\|\cdot,\dots,\cdot\|)$ be an $n$-normed space and $Y=\{y_{1},\dots,y_{n}\}$ is a 
linearly independent set in $X$. For a $j \in \{1,\dots,n\}$ we consider $Y\setminus \{y_{j}\}$. We define a subspace of $X$,
namely
$$
Y_{j}^{0}:= {\rm span} \, Y \setminus \{y_{j}\} :=\left\{\sum_{\substack{i=1\\ i \neq j}}^{n} \alpha_{i} y_{i} 
\, \, ;  \, \,\alpha_{i} \in \mathbb{R}.\right\}.
$$
For any $u \in X$, the corresponding coset in $X$ is
$$\overline{u}:=\left\{u+\sum_{\substack{i=1\\ i\neq j}}^{n} \alpha_{i} y_{i}: \alpha_{i} \in \mathbb{R}\right\}.$$
Then we have $\overline{0} = {\rm span} \,Y \setminus \{y_{j}\} = Y_{j}^{0}$ and if $\overline{u} = \overline {v}$ 
then $u-v \in {\rm span} \, Y \setminus \{y_{j}\}$. We define quotient spaces of $X$ as
\begin{equation}\label{1}
X^{*}_{j}:=X/Y_{j}^{0}:=\{\overline{u}: u \in X\}.
\end{equation}
The addition and scalar multiplication apply in this space.
Moreover, we define a norm  of $X^{*}_{j}$ defined by
\begin{align} \label{2}
\begin{aligned}
\|\overline{u}\|^{*}_{j}:=& \|u,y_{1},\dots,y_{j-1},y_{j+1},\dots,y_{n}\|.
\end{aligned}
\end{align}
Using the above construction, we can get $n$ quotient spaces. These quotient spaces have same structure but different elements. 
The set that contains all quotient spaces constructed above is called class-$1$ collection.

In the above construction we 'eliminate' one vector from $Y$. Now we construct quotient spaces by 'eliminating' $m$ 
vectors from $Y$ with  $1\leq m \leq n$. Let $(X,\|\cdot,\dots,\cdot\|)$ be an $n$-normed space and 
$Y=\{y_{1},\dots,y_{n}\}$ is a linearly independent set in $X$. For an $m\in \mathbb{N}, 1 \leq m \leq n$ 
and $i_{1},\dots,i_{m} \in \{1,\dots,n\}$ we observe $Y\setminus \{y_{i_{1}},\dots,y_{i_{m}}\}$. We define a subspace of $X$,
namely
$$
Y_{i_{1},\dots, i_{m}}^{0}:= {\rm span} \, Y \setminus \{y_{i_{1}},\dots, y_{i_{m}}\} :=\left\{\sum_{\substack{i=1\\ i 
\neq i_{1},\dots, i_{m}}}^{n} \alpha_{i} y_{i} \, \, ;  \, \,\alpha_{i} \in \mathbb{R}.\right\}.
$$
For any $u \in X$, the corresponding coset in $X$ is
$$\overline{u}:=\left\{u+\sum_{\substack{i=1\\ i\neq i_{1},\dots, i_{m}}}^{n} \alpha_{i} y_{i}: \alpha_{i} \in \mathbb{R}\right\}.$$
Then we have $\overline{0} = {\rm span} \,Y \setminus \{y_{i_{1}},\dots, y_{i_{m}}\} = Y_{i_{1},\dots, i_{m}}^{0}$ 
and, if $\overline{u} = \overline {v}$, then $u-v \in {\rm span} \, Y \setminus \{y_{i_{1}},\dots, y_{i_{m}}\}$. 
We define quotient spaces of $X$ as
\begin{equation}\label{1}
X^{*}_{i_{1},\dots, i_{m}}:=X/Y_{i_{1},\dots, i_{m}}^{0}:=\{\overline{u}: u \in X\}.
\end{equation}
The addition and scalar multiplication apply in this space.
Moreover, we define a norm  of $X^{*}_{i_{1},\dots, i_{m}}$ defined by
\begin{align} \label{2}
\begin{aligned}
\|\overline{u}\|^{*}_{i_{1},\dots, i_{m}}:=& \|u,y_{1},\dots,y_{i_{1}-1},y_{i_{1}+1},\dots,y_{n}\|+\dots \\ 
&+ \|u,y_{1},\dots,y_{i_{m}-1},y_{i_{m}+1},\dots,y_{n}\|.
\end{aligned}
\end{align}
Using the above construction, we can get $\binom{n}{m}$ quotient spaces. For an $m  \in \mathbb{N}$ with 
$1 \leq m \leq n$, the set that contains all quotient spaces constructed above is called class-$m$ collection.

One can see that the right hand of equation (\ref{2}) is a summation of norms of class-$1$ collection. 
Then equation (\ref{2}) can be written as
$$\|\overline{u}\|^{*}_{i_{1},\dots, i_{m}}=\|\overline{u}\|^{*}_{i_{1}}+\dots + \|\overline{u}\|^{*}_{i_{m}}.$$
 Moreover, by saying 'norms of class-$m$ collection' we mean 'all the norms of each quotient space in a class-$m$ 
 collection'. Actually, some previous researchers who worked on $n$-normed spaces have used this approach. 
 They used the norm of class-$n$ collection to investigate some characteristics of the $n$-normed spaces. 
 If we compare our approach to their works, ours provides more than one viewpoint to observe characteristics 
 of the $n$-normed spaces. One can see that the summation of all norms of class-$1$ collection is the norm 
 of class-$n$ collection. Furthermore, for any $m \in \mathbb{N}, \,\, 1\leq m \leq n$ we will use the norms 
 of any class-$m$ collection to study some topological structures of the $n$-normed spaces.


\section{Results and Discussion}

As mentioned before, the construction of quotient spaces depends on a set of $n$ linearly independent vectors we choose. 
The choice of the set would not matter. We can choose any $n$ linearly independent vectors in the $n$-normed spaces to 
form the set. So from here on, we will not mention the linearly independent set explicitly, unless it is necessary.

Here we introduce some topological characteristics we observe by using the norms of class-$m$ collection with $m \in 
\mathbb{N}, 1 \leq m \leq n$.

\begin{Definition}\label{3}
	Let $(X,\|\cdot,\dots,\cdot\|)$ be an $n$-normed space. For an $m \in \mathbb{N}$ with $1\leq m \leq n$, we say 
a sequence $\{x_{k}\} \subset X$ converges with respect to the norms of class-$m$ collection to $x$ if for any 
$\epsilon >0 $, there exists an $N\in \mathbb{N}$ such that for $k \geq N$ we have
	$$\|\overline{x_{k}-x}\|^{*}_{i_{1},\dots,i_{m}} < \epsilon,$$
	for every $i_{1},\dots,i_{m} \in \{1,\dots,n\}$ with  $i_{1} < \cdots < i_{m}$. In this case we also say
	$$\lim_{n \to \infty} \|\overline{x_{k}-x}\|^{*}_{i_{1},\dots,i_{m}} = 0.$$
	for every $i_{1},\dots,i_{m} \in \{1,\dots,n\}$ with  $i_{1} < \cdots < i_{m}$. If $\{x_{k}\}$ does not converge, we say it diverges.
\end{Definition}

Note that this definition also says that a sequence converges with respect to the norms of a class-$m$ collection 
to $x$ if and only if the sequence converges in each quotient space of the class-$m$ collection.

We want to know if there is a relation between convergence with respect to the norms of class-$1$ collection and 
class-$m$ collection for any $m \in \mathbb{N}$ with $1\leq m \leq n$. First, we examine the following example.
\begin{Example}\label{5c}
	Let $(\mathbb{R}^{d},\|\cdot,\cdot,\cdot\|)$ be a $3$-normed space and for $m=1, 2, 3$ consider all norms in  
each class-$m$ collection. A sequence $\{x_{k}\}$ converges with respect to the norms of class-$2$ collection to 
$x$ if and only if, for any $\epsilon >0$ there exists an $N \in \mathbb{N}$ such that for any $k \geq N$ we have
	\begin{equation}\label{5}
	\begin{aligned}
	\|\overline{x_{k}-x}\|^{*}_{1,2} =  \|\overline{x_{k}-x}\|^{*}_{1} + \|\overline{x_{k}-x}\|^{*}_{2} < \frac{2}{3} \epsilon , \\
	\|\overline{x_{k}-x}\|^{*}_{1,3} = \|\overline{x_{k}-x}\|^{*}_{1} + \|\overline{x_{k}-x}\|^{*}_{3} < \frac{2}{3} \epsilon , \\
	\|\overline{x_{k}-x}\|^{*}_{2,3}  = \|\overline{x_{k}-x}\|^{*}_{2} + \|\overline{x_{k}-x}\|^{*}_{3} < \frac{2}{3} \epsilon .
	\end{aligned}
	\end{equation}
	Equation (\ref{5}) implies
	\begin{align}\label{5a}
	\|\overline{x_{k}-x}\|^{*}_{1,2,3}=\|\overline{x_{k}-x}\|^{*}_{1} + \|\overline{x_{k}-x}\|^{*}_{2}+ \|\overline{x_{k}-x}\|^{*}_{3} < \epsilon.
	\end{align}
	The left hand of equation (\ref{5a}) is nothing but the norm of class-$3$ collection. It means at the same time 
$\{x_{k}\}$ converges with respect to the norms of class-$3$ collection to $x$. Equation (\ref{5}) also implies
	\begin{align}\label{5b}
	\|\overline{x_{k}-x}\|^{*}_{1} < \epsilon \, ; \, \, \, \;  \|\overline{x_{k}-x}\|^{*}_{2} < \epsilon \, ;  
\, \, \, \; \|\overline{x_{k}-x}\|^{*}_{3} < \epsilon.
	\end{align}
	The left hands of three equations in (\ref{5b}) are the norms of class-$1$ collection. By definition, 
this means the sequence $\{x_{k}\}$ converges with respect to the norms of class-$1$ collection to $x$.
	Here we have if $\{x_{k}\}$ converges with respect to the norms of class-$2$ collection to $x$, then it 
converges with respect to the norms of class-$1$ collection to $x$. Also it converges with respect to the norms 
of class-$3$ collection to $x$. One can see that the converses are also true.
\end{Example}	
Example \ref{5c} indicates that all types of convergence are equivalent. This is true as we state in the following theorem.

\begin{theorem}\label{6}
	Let  $(X,\|\cdot, \dots, \cdot\|)$ be an $n$-normed space with class-$m$ collections of $X$ for any 
$m\in \mathbb{N}, 1\leq m \leq n$. A sequence $\{x_{k}\} \subset X$ is convergent with respect to the norms 
of class-$1$ collection if and only if it is convergent with respect to the norms of class-$m$ collection.
\end{theorem}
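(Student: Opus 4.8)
The plan is to prove the biconditional by exploiting the single structural fact recorded just before the theorem, namely that every class-$m$ norm decomposes as a sum of class-$1$ norms,
$$\|\overline{u}\|^{*}_{i_{1},\dots,i_{m}}=\|\overline{u}\|^{*}_{i_{1}}+\dots+\|\overline{u}\|^{*}_{i_{m}},$$
together with the nonnegativity of the $n$-norm (condition i), which guarantees that each summand $\|\overline{u}\|^{*}_{i_{l}}$ is nonnegative. With these two observations in hand, both implications reduce to elementary $\epsilon$-management, and there is nothing analytically deep to verify.

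For the direction ``class-$1$ $\Rightarrow$ class-$m$'', I would assume $\{x_{k}\}$ converges to $x$ with respect to the norms of class-$1$ collection, so that $\|\overline{x_{k}-x}\|^{*}_{j}\to 0$ for each fixed $j\in\{1,\dots,n\}$. Given $\epsilon>0$, for each $j$ I would choose $N_{j}$ with $\|\overline{x_{k}-x}\|^{*}_{j}<\epsilon/m$ whenever $k\geq N_{j}$, and set $N=\max_{1\le j\le n}N_{j}$; this maximum exists because there are only finitely many indices. Then for any $i_{1}<\dots<i_{m}$ and any $k\ge N$, summing the $m$ estimates through the decomposition above yields $\|\overline{x_{k}-x}\|^{*}_{i_{1},\dots,i_{m}}<m\cdot(\epsilon/m)=\epsilon$, which is precisely convergence with respect to the norms of class-$m$ collection.

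For the direction ``class-$m$ $\Rightarrow$ class-$1$'', I would assume $\{x_{k}\}$ converges to $x$ with respect to the norms of class-$m$ collection. Fixing $\epsilon>0$ and taking the $N$ supplied by that hypothesis, nonnegativity of the summands gives, for each index $i_{l}$ appearing in a tuple $i_{1}<\dots<i_{m}$ and each $k\ge N$,
$$\|\overline{x_{k}-x}\|^{*}_{i_{l}}\le\|\overline{x_{k}-x}\|^{*}_{i_{1},\dots,i_{m}}<\epsilon.$$
Since $m\ge 1$, every single index $j\in\{1,\dots,n\}$ belongs to at least one such $m$-tuple, so $\|\overline{x_{k}-x}\|^{*}_{j}<\epsilon$ for all $j$ and all $k\ge N$, which is class-$1$ convergence.

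I do not anticipate a genuine obstacle: the argument is driven entirely by the additive decomposition and by nonnegativity, and the only points requiring a moment's care are purely finitary—that the maximum of the finitely many thresholds $N_{j}$ is well defined, and that every coordinate index is covered by some $m$-element subset of $\{1,\dots,n\}$, which holds exactly because $1\le m\le n$. Both are immediate, so the essence of the proof is simply the remark that each class-$m$ norm dominates its constituent class-$1$ norms from above while being dominated by their sum, forcing the two notions of convergence to coincide.
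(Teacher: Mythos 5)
Your proposal is correct and follows essentially the same route as the paper: both directions rest on the decomposition $\|\overline{u}\|^{*}_{i_{1},\dots,i_{m}}=\|\overline{u}\|^{*}_{i_{1}}+\dots+\|\overline{u}\|^{*}_{i_{m}}$, with the forward implication obtained by splitting $\epsilon$ into $m$ pieces and the converse by using nonnegativity of the summands. Your version is in fact slightly more careful than the paper's, since you explicitly take the maximum of the finitely many thresholds $N_{j}$ and note that every index lies in some $m$-tuple, two points the paper leaves implicit.
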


\begin{proof}
Let $(X,\|\cdot, \dots, \cdot\|)$ be an $n$-normed space and consider all class-$m$ collections of $X$. 
Suppose that  $\{x_{k}\} \subset X$ converges with respect to the norms of class-$1$ collection to $x$. 
Then for any $\epsilon > 0$ there exists an $N_{0} \in \mathbb{N}$ such that for $n\geq N_{0}$ we have
\begin{align*}
\|\overline{x_{k}-x}\|^{*}_{1} &< \frac{1}{m} \epsilon, \\
&\vdots \\
\|\overline{x_{k}-x}\|^{*}_{n} &< \frac{1}{m} \epsilon,
\end{align*}
for any $m\in \mathbb{N}, 1\leq m \leq n$. Therefore, we have
\begin{align*}
\|\overline{x_{k}-x}\|^{*}_{i_{1}}+\dots +\|\overline{x_{k}-x}\|^{*}_{i_{m}} < \epsilon,
\end{align*}
for every $i_{1},\dots, i_{m} \in \{1,\dots,n\}$. By the definition, this means $\{x_{k}\}$ converges 
with respect to the norms of class-$m$ collection to $x$.

Conversely, suppose that $\{x_{k}\}$ converges with respect to the norms of class-$m$ collection to $x$. 
Then for any $\epsilon > 0$ there exists an $N_{1} \in \mathbb{N}$ such that for $n\geq N_{1}$ we have
$$\|\overline{x_{k}-x}\|^{*}_{i_{1}}+\dots +\|\overline{x_{k}-x}\|^{*}_{i_{m}} < \epsilon,$$
for every $i_{1},\dots, i_{m} \in \{1,\dots,n\}$ and $i_{1}<\dots < i_{m}$. Then we have
$$\|\overline{x_{k}-x}\|^{*}_{j} < \epsilon, $$
for any $j \in \{1,\dots, n\}$. By the definition $\{x_{k}\}$ also converges with respect to the norms of class-$1$ collection to $x$.
\end{proof}

Theorem (\ref{6}) states that for any $1\leq m \leq n$, all types of convergence with respect to class-$m$ 
collection are equivalent. So unless we need to specify the class explicitly, we may simply use the word 
'converges' instead of 'converges with respect to the norms of class-$m$ collection'. We also denote $\{x_{k}\}$ 
converges to $x$ by $x_{n} \longrightarrow x$. 
Here is a definition of a closed set in the $n$-normed space.
\begin{Definition}\label{4}
	Let $(X,\|\cdot,\dots,\cdot\|)$ be an $n$-normed spaces and $K \subseteq X$. The set $K$ is called 
closed if for any sequence $\{x_{k}\}$ in $K$ that converge in $X$, its limit belongs to $K$.
\end{Definition}

One can see that for observing the convergence of a sequence we have to use all norms of a class-$m$ collection. 
For simplicity, we need to reduce the number of the norms we used. So, is it possible to reduce the number of the norms? 
First, we consider the following example.

\begin{Example}
	Let $(\mathbb{R}^{d}, \|\cdot, \cdot, \cdot\|)$ be a $3$-normed space. We define class-$1$,2,3 collections of $\mathbb{R}^{d}$.
	Suppose that a sequence $\{x_{k}\} \subset \mathbb{R}^{d}$ convergence with respect to the norms of class-$2$ collection to $x$. 
For any $\epsilon > 0$, there is an $N \in \mathbb{N}$ such that for $k \geq N$ we have
	$$\|\overline{x_{k}-x}\|^{*}_{1,2} < \epsilon,$$
	$$\|\overline{x_{k}-x}\|^{*}_{1,3} < \epsilon,$$
	$$\|\overline{x_{k}-x}\|^{*}_{2,3} < \epsilon.$$
	Now, let's just consider the first two equations. The two equations imply
	$$ \|\overline{x_{k}-x}\|^{*}_{1}  < \epsilon, \, \, \, \, \, \, \,
	\|\overline{x_{k}-x}\|^{*}_{2}  < \epsilon, \, \, \, \, \, \, \,  \|\overline{x_{k}-x}\|^{*}_{3}  < \epsilon.$$
	By the definition, we have $\{x_{k}\}$ converges with respect to the norms of class-$1$ collection to $x$. 
Since all types of convergence are equivalent for any class-$m$ collection (with $m \in \mathbb{N}, 1\leq m \leq n$), 
it is sufficient to review the convergence just by using two norms of class-$2$ collection, namely 
$\|\cdot\|_{1,2}$ and $\|\cdot\|_{1,3}$.	
\end{Example}

\noindent \textbf{Remark.} Generally, for a fix $ m\in \mathbb{N}, 1 \leq m \leq n,$ the convergence (of a sequence) 
with respect to the norms of class-$m$ collection is sufficient to be observed by using some norms 
$\|\cdot\|^{*}_{i_{1},\dots,i_{m}}$ we choose such that
\begin{align}\label{10}
\bigcup \{i_{1},\dots,i_{m}\} \supseteq \{ 1,\dots,n\}.
\end{align}
Moreover, the least number of norms that can be used to define convergence of class-$m$ is 
$\left\lceil \frac{n}{m}\right\rceil$ norms. For some class-$m$ collections, we do not need to use all norms of a class-$m$ collection to observe the convergence. One can see that 
for $m=1$ or $m=n$ we have to use all norms of the class-$m$ collection.

For example, let $(X,\cdot,\cdot,\cdot,\cdot,\cdot|\|)$ be a $5$-normed space and consider the norms 
of class-$2$ collection. The least number of norms that we can use is $\left\lceil \frac{5}{2}\right\rceil = 3$, 
namely $\|\cdot\|_{1,2}^{*}, \, \, \|\cdot\|_{1,3}^{*}, \, \|\cdot\|_{4,5}^{*}$.
If it is less than three norms, then it will not meet the definition requirements. This following example 
shows what happened if we choose less norms than $\left\lceil \frac{n}{m}\right\rceil$ norms.

\begin{Example}
	Let $(\mathbb{R}^{5},\langle\cdot,\cdot\rangle)$ be an inner product space and $Y$ is a set of 
standard basis vectors of $\mathbb{R}^{5}$. We define a standard $5$-norm as
	\begin{equation}\label{1p2}
	\| x_{1},\dots,x_{5}\|^{S}:= \left\vert
	\begin{array}{ccc}
	\langle x_{1},x_{1}\rangle & \cdots & \langle x_{1},x_{5}\rangle \\
	\vdots & \ddots & \vdots \\
	\langle x_{5},x_{1}\rangle & \cdots & \langle x_{5},x_{5}\rangle%
	\end{array}\right\vert^{1/2}.
	\end{equation}
	We consider sequence $x_{k}=(0,0,0,0,k)$ in $\mathbb{R}^{5}$. By the definition, one can see that 
this sequence $x_{k}$ is not converge with respect to the norms of class-$m$ collection, for any $m=1,\dots,5$.

Let us consider class-$2$ collection of $\mathbb{R}^{5}$ and observe the convergence. Note that three is 
the least number of norms we propose to observe the convergence with respect to the norms of class-$2$ collection. 
Instead of using three norms, here we try to use two norms of class-$2$ collection. Let $x=(0,0,0,0,0)$, 
choose two norms of class-$2$ collection, namely $\|\cdot\|^{*}_{1,2}, \|\cdot\|^{*}_{3,4}$.
	For any $k \in \mathbb{N}$ we have
	\begin{equation*}
	\|\overline{x_{k}-x}\|^{*}_{1,2}=\|\overline{x_{k}}\|^{*}_{1,2}=\|x_{k},y_{2},y_{3},y_{4},y_{5}\|^{S} + 
\|x_{k},y_{1},y_{3},y_{4},y_{5}\|^{S} =0,
	\end{equation*}
	and
	\begin{equation*}
	\|\overline{x_{k}-x}\|^{*}_{3,4}=  \|\overline{x_{k}}\|^{*}_{1,2}=\|x_{k},y_{1},y_{2},y_{4},y_{5}\|^{S} + 
\|x_{k},y_{1},y_{2},y_{3},y_{5}\|^{S} =0.
	\end{equation*}
	the value of these norms is 0 because $x_{k} = k\, y_{5}$, which means $x_{k}$ and $y_{5}$ are linearly dependent. 
If we just take these two norms to investigate the convergence of $\{x_{k}\}$, then we have 
	$x_{n} \longrightarrow x$. This is a false conclusion. 
	But if we add another norm of class-$2$, for example $ \|\overline{x_{k}-x}\|^{*}_{1,5}$, then we have
	\begin{align*}
	\|\overline{x_{k}-x}\|^{*}_{1,5} & =  \|x_{k},y_{2},y_{3},y_{4},y_{5}\|^{S} + \|x_{k},y_{1},y_{2},y_{3},y_{4}\|^{S} \\
	& =0 + \left\vert
	\begin{array}{ccccc}
	\langle x_{k},x_{k}\rangle & \langle x_{k},x_{1}\rangle &\langle x_{k},x_{2}\rangle &\langle x_{k},x_{3}\rangle & \langle x_{k},y_{4}\rangle \\
	\langle x_{1},x_{k}\rangle & \langle x_{1},x_{1}\rangle &\langle x_{1},x_{2}\rangle &\langle x_{1},x_{3}\rangle & \langle x_{1},y_{4}\rangle \\
	\langle x_{2},x_{k}\rangle & \langle x_{2},x_{1}\rangle &\langle x_{2},x_{2}\rangle &\langle x_{2},x_{3}\rangle & \langle x_{2},y_{4}\rangle \\
	\langle x_{3},x_{k}\rangle & \langle x_{3},x_{1}\rangle &\langle x_{3},x_{2}\rangle &\langle x_{3},x_{3}\rangle & \langle x_{3},y_{4}\rangle \\
	\langle y_{4},x_{k}\rangle & \langle x_{4},x_{1}\rangle &\langle x_{4},x_{2}\rangle &\langle x_{4},x_{3}\rangle & \langle y_{4},y_{4}\rangle
	\end{array}\right\vert^{1/2}\\
	&=n.
	\end{align*}	
	Then, for $n \to \infty$, $\|\overline{x_{k}-x}\|^{*}_{1,5} \to \infty$, which means that $x_{k}$ diverges.
\end{Example}

The example shows that in this case, we can not take less than $\left\lceil \frac{5}{2} \right\rceil$ norms 
to examine the convergence of a sequence. Also, The norms we choose must satisfy condition (\ref{10}). 
Furthermore, we will study bounded sets in $n$-normed spaces with respect to the norms of quotient sets of the $n$-normed spaces.
\begin{Definition}
	Let $(X,\|\cdot, \dots, \cdot\|)$ be an $n$-normed spaces with class-$m$ collection for an $m \in \mathbb{N}, 
1\leq m \leq n$ and $K \subseteq X$ be a nonempty set. The set $K$ is called bounded with respect to the norms of 
class-$m$ collection if and only if for any $x \in K$ there exists an $M>0$ such that
	$$\|\overline{x}\|^{*}_{i_{1},\dots,i_{m}}\leq M,$$
	for every $i_{1},\dots,i_{m} \in \{1,\dots,n\}$ with $i_{1}<\dots<i_{m}$.
\end{Definition}

Moreover, we will show that all types of the boundedness with respect to the norms of class-$m$ collections for 
any $m \in \mathbb{N}, 1 \leq m \leq n$ are equivalent. Let's consider the following example.

\begin{Example}
	Let $(\mathbb{R}^{d},\|\cdot,\cdot,\cdot\|)$ be a $3$-normed space, $K \subset \mathbb{R}$ and define class-$1, 2, 3$ 
collections in $\mathbb{R}^{d}$.
	Suppose that $K \subseteq \mathbb{R}^{d}$ is bounded with respect to the norms of class-$2$ collection. Then for any 
$x \in K$, there is an $M>0$ such that
	\begin{equation}
	\begin{aligned}
	\|\overline{x}\|^{*}_{2,3}  = \|\overline{x}\|^{*}_{2}+\|\overline{x}\|^{*}_{3} \leq M, \\
	\|\overline{x}\|^{*}_{1,3} = \|\overline{x}\|^{*}_{1}+\|\overline{x}\|^{*}_{3} \leq M,\\
	\|\overline{x}\|^{*}_{1,2} = \|\overline{x}\|^{*}_{1}+\|\overline{x}\|^{*}_{2} \leq M.
	\end{aligned}
	\end{equation}
	These imply
	$$2\left(\|\overline{x}\|^{*}_{1}+\|\overline{x}\|^{*}_{2}+\|\overline{x}\|^{*}_{3} \right)=
\|\overline{x}\|^{*}_{2,3}+\|\overline{x}\|^{*}_{1,3}+\|\overline{x}\|^{*}_{1,2}  \leq 3M.$$
	or
	\begin{align}\label{7} \|\overline{x}\|^{*}_{1,2,3}=\|\overline{x}\|^{*}_{1}+
\|\overline{x}\|^{*}_{2}+\|\overline{x}\|^{*}_{3} \leq \frac{3}{2}M =C,
	\end{align}
	with $C>0$. One can see that the left hand of equation (\ref{7}) is the norm of class-$3$ collection. 
This means that $K$ is bounded with respect to the norm in class-$3$ collection.
	Equation (\ref{7}) implies
	\begin{align}\label{7b}
	\|\overline{x}\|^{*}_{1} \leq C, \quad \|\overline{x}\|^{*}_{2}\leq C, \quad \|\overline{x}\|^{*}_{3} \leq C.
	\end{align}
	The left hands of the three equations in (\ref{7b}) are nothing but all norms of class-$1$ collection. 
This means $K$ is bounded with respect to the norms of class-$1$ collection.

In this example, if $K$ is bounded with respect to the norms of class-$2$ collection then $K$ is bounded 
with respect to the norms of class-$3$ collection. It also implies $K$ is bounded with respect to the norms 
of class-$1$ collection. One can see that the converses are also true. The example indicates that all types 
of boundedness are equivalent. We present the equivalence of the boundedness in the following theorem.
\end{Example}

\begin{theorem}\label{8}
	Let  $(X,\|\cdot, \dots, \cdot\|)$ is an $n$-normed space with class-$m$ collections for any 
$m \in \mathbb{N}, 1 \leq m \leq n$ and $K \subset X$ nonempty. The set $K$ is bounded with respect 
to the norms of class-$1$ collection if and only if it is bounded with respect to class-$m$ collection.
\end{theorem}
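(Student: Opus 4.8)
The plan is to reduce everything to the additive relation $\|\overline{x}\|^{*}_{i_{1},\dots,i_{m}} = \|\overline{x}\|^{*}_{i_{1}} + \dots + \|\overline{x}\|^{*}_{i_{m}}$ noted just after equation (\ref{2}), combined with the fact that each term $\|\overline{x}\|^{*}_{j} = \|x,y_{1},\dots,y_{j-1},y_{j+1},\dots,y_{n}\|$ is nonnegative by axiom (i) of the $n$-norm. Once these two observations are in hand, the argument is the exact analogue of the computation carried out in the boundedness example preceding the statement, and structurally parallel to Theorem \ref{6}: a bound on all class-$1$ norms controls any sum of $m$ of them, and conversely a bound on a sum of nonnegative terms bounds each individual summand.

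For the forward direction I would suppose $K$ is bounded with respect to the norms of class-$1$ collection, so that for each $x \in K$ there is an $M > 0$ with $\|\overline{x}\|^{*}_{j} \leq M$ for every $j \in \{1,\dots,n\}$. Fixing any indices $i_{1} < \dots < i_{m}$ and summing the $m$ relevant inequalities gives $\|\overline{x}\|^{*}_{i_{1},\dots,i_{m}} = \|\overline{x}\|^{*}_{i_{1}} + \dots + \|\overline{x}\|^{*}_{i_{m}} \leq mM$. Since $mM$ is a single positive constant independent of the chosen index set, $K$ is bounded with respect to the norms of class-$m$ collection.

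For the converse I would suppose $K$ is bounded with respect to the norms of class-$m$ collection, so that for each $x \in K$ there is an $M > 0$ with $\|\overline{x}\|^{*}_{i_{1},\dots,i_{m}} \leq M$ for all $i_{1} < \dots < i_{m}$. Here nonnegativity does the work: for any fixed $j \in \{1,\dots,n\}$, choose some admissible index set $\{i_{1},\dots,i_{m}\}$ containing $j$; then $\|\overline{x}\|^{*}_{j} \leq \|\overline{x}\|^{*}_{i_{1}} + \dots + \|\overline{x}\|^{*}_{i_{m}} = \|\overline{x}\|^{*}_{i_{1},\dots,i_{m}} \leq M$, because every other summand is $\geq 0$. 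As $j$ was arbitrary, each class-$1$ norm is bounded by $M$, so $K$ is bounded with respect to the norms of class-$1$ collection.

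I do not expect a genuine obstacle: both implications are purely algebraic consequences of nonnegativity and additivity, with no completeness or linear-independence subtleties entering. The one point needing a word of care is the converse step, where one must confirm that for every single index $j$ there is at least one $m$-element index set containing it; this is immediate since $1 \leq m \leq n$, and it is precisely what lets the class-$m$ bound be specialized to control each class-$1$ norm. If the definition of boundedness is read with a uniform $M$ valid for all $x \in K$ simultaneously, the same constants $mM$ and $M$ work verbatim, so the equivalence is unaffected by that reading.
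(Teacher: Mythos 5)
Your proposal is correct and follows essentially the same route as the paper's own proof: the forward direction sums the $m$ class-$1$ bounds to obtain the constant $mM$, and the converse drops all but one nonnegative summand from $\|\overline{x}\|^{*}_{i_{1},\dots,i_{m}}$ to bound each $\|\overline{x}\|^{*}_{j}$. Your explicit appeal to nonnegativity and to the existence of an $m$-element index set containing any given $j$ only makes precise a step the paper leaves implicit.
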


\textit{Proof.}
Let $(X,\|\cdot, \dots, \cdot\|)$ is an $n$-normed space, $K \subset X$ nonempty and consider all 
class-$m$ collections of $X$, for $m\in \mathbb{N}, 1 \leq m \leq n$. Suppose that the set $K$ is 
bounded with respect to the norms of class-$1$ collection, then for any $x\in K$, there is an  $M>0$ such that
$$\|\overline{x}\|^{*}_{1} \leq M, \quad \|\overline{x}\|^{*}_{2}\leq M, \quad \cdots , \quad \|\overline{x}\|^{*}_{n} \leq M.$$
Then for an $m \in \mathbb{N}, 1\leq m \leq n$ we have
\begin{align}\label{9}
\|\overline{x}\|^{*}_{i_{1},\dots,i_{m}}=\|\overline{x}\|^{*}_{i_{1}} + \dots + 
\|\overline{x}\|^{*}_{i_{m}} \leq m \cdot M =C, \quad C>0,
\end{align}
for any $i_{1},\dots,i_{m} \in \{1,\dots,n\}$. The left hand of equation (\ref{9}) 
represents all norms of class-$m$ collection. By the definition, $K$ is bounded with 
respect to the norms of class-$m$ collection with $m \in \mathbb{N}, 1\leq m \leq n$.

Conversely, supposed that $K$ is bounded with respect to the norms of class-$m$ collection for 
an $m\in \mathbb{N}, 1 \leq m \leq n$. For any $x \in K$ there exists an $M>0$ such that
$$\|\overline{x}\|^{*}_{i_{1}} + \dots + \|\overline{x}\|^{*}_{i_{m}}=\|\overline{x}\|^{*}_{i_{1},\dots,i_{m}} \leq M.$$
for every $i_{1},\dots,i_{m} \in \{1,\dots,n\}$ with $i_{1}<\dots<i_{m}$. It implies
$$\|\overline{x}\|^{*}_{j}\leq M, $$  for every $j\in \{1,\dots, n\}$. Consider the norms 
of class-$1$ collection. Then by the definition this means $K$ is bounded with respect to the 
norms of class-$1$ collection. \qed

Since all types of boundedness are equivalent, we will not mention the type of boundedness explicitly. 
We may simply use the word 'bounded' instead of 'bounded with respect 
to the norms of class-$m$ collection'.

We will give an example to show that we do not need to choose all norms of class-$m$ collection for some 
$m \in \mathbb{N}, 1 \leq m \leq n$ to examine the boundedness.

\begin{Example}
	We observe $(\mathbb{R}^{d},\|\cdot,\cdot,\cdot\|)$ as a $3$-normed space and consider all norms of 
class-$2$ collection. We have three norms in class-$2$ collection, namely $\|\cdot\|_{1,2}^{*},
\|\cdot\|_{1,3}^{*},\|\cdot\|_{2,3}^{*}$. It is sufficient to examine the boundedness of a set 
$K \subseteq \mathbb{R}^{d}$ just by using two norms of class-$2$, namely $\|\overline{x}\|^{*}_{1,2}, 
\|\overline{x}\|^{*}_{1,3}$. If, for every $x \in K$, 
	$$\|\overline{x}\|^{*}_{1,2} \leq M, \text{ and } \, \, \|\overline{x}\|^{*}_{1,3}\leq M; \, \, M>0, $$
	then we have
	$$2\|\overline{x}\|^{*}_{1}+\|\overline{x}\|^{*}_{2}+\|\overline{x}\|^{*}_{3} \leq 2M = C; \, \, C>0.$$
	This implies
	\begin{equation}\label{8a}
	\|\overline{x}\|^{*}_{1,2,3} \leq 2\|\overline{x}\|^{*}_{1}+\|\overline{x}\|^{*}_{2}+\|\overline{x}\|^{*}_{3} \leq C.
	\end{equation}
	The left hand of equation (\ref{8a}) is the norm of class-$3$ collection. By the definition, this means $K$ 
is bounded with respect to the norms of class-$3$ collection. Since all types of boundedness are equivalent, $K$ is bounded.	
\end{Example}

Generally, the definition of bounded set of class-$m$ is sufficient to be observed by just using some norms 
$\|\cdot\|^{*}_{i_{1},\dots,i_{m}}$ such that condition (\ref{10}) applies.
Moreover, the sufficient number of norms that we use to define bounded with respect to the norms of class-$m$ 
collection is $\left\lceil \frac{n}{m}\right\rceil$ norms. One can see that these conditions match with 
conditions for convergence of a sequence.

Next, we will present a concept of completeness in the $n$-normed spaces with respect to the norms of class-$m$ collection. 
Here we give some basic definitions related to completeness.

\begin{Definition}
	Let $(X,\|\cdot,\dots,\cdot\|)$ be an $n$-normed spaces. For any $m\in \mathbb{N}, 1\leq m \leq n$ 
a sequence $\{x_{k}\} \subset X$ is called a Cauchy sequence with respect to the norms of class-$m$ 
collection if for any $\epsilon >0$, there exists an $N\in \mathbb{N}$ such that, for every $k,l \geq N$, we have
	$$\|\overline{x_{k}-x_{l}}\|^{*}_{i_{1},\dots,i_{m}}< \epsilon,$$
	for every  $i_{1},\dots,i_{m} \in \{1,\dots,n\}$ and $i_{1} < \dots < i_{m}$. In other words
	$$\lim_{k,l \to \infty} \|x_{k}-x_{l}\|^{*}_{i_{1},\dots,i_{m}}=0,$$
	for every  $i_{1},\dots,i_{m} \in \{1,\dots,n\}$ and $i_{1} < \dots < i_{m}$.
\end{Definition}

\begin{theorem}
	Let $(X,\|\cdot,\dots,\cdot\|)$ be an $n$-normed spaces with class-$m$ collections for any 
$m\in \mathbb{N}, 1\leq m \leq n$.  If $\{x_{n}\}$ is convergent with respect to the norms of class-$m$ collection, 
then $\{x_{n}\}$ is Cauchy with respect to the norms of class-$m$ collection.
\end{theorem}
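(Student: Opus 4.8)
The plan is to exploit the fact that, for each fixed tuple $i_{1} < \dots < i_{m}$, the expression $\|\cdot\|^{*}_{i_{1},\dots,i_{m}}$ is a genuine norm on the quotient space $X^{*}_{i_{1},\dots,i_{m}}$ and hence obeys the triangle inequality. Once this is in hand, the implication ``convergent implies Cauchy'' follows by the classical $\epsilon/2$ estimate, applied simultaneously across all such tuples.

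First I would record the subadditivity of each class-$m$ norm. Since $\|\overline{u}\|^{*}_{i_{1},\dots,i_{m}} = \|\overline{u}\|^{*}_{i_{1}} + \dots + \|\overline{u}\|^{*}_{i_{m}}$, and each summand $\|\overline{u}\|^{*}_{i_{j}} = \|u,y_{1},\dots,y_{i_{j}-1},y_{i_{j}+1},\dots,y_{n}\|$ is subadditive in $u$ by axiom (iv) of the $n$-norm, the finite sum is subadditive as well. Using the linearity of the quotient map, namely $\overline{u+v} = \overline{u} + \overline{v}$, together with the identity $\overline{x_{k}-x_{l}} = \overline{x_{k}-x} + \overline{x-x_{l}}$ and the homogeneity $\|\overline{x-x_{l}}\|^{*}_{i_{1},\dots,i_{m}} = \|\overline{x_{l}-x}\|^{*}_{i_{1},\dots,i_{m}}$ coming from axiom (iii), I obtain
$$\|\overline{x_{k}-x_{l}}\|^{*}_{i_{1},\dots,i_{m}} \leq \|\overline{x_{k}-x}\|^{*}_{i_{1},\dots,i_{m}} + \|\overline{x_{l}-x}\|^{*}_{i_{1},\dots,i_{m}}$$
for every tuple $i_{1} < \dots < i_{m}$.

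Next I would invoke the hypothesis. Assume $\{x_{k}\}$ converges to some $x$ with respect to the norms of class-$m$ collection. Given $\epsilon > 0$, Definition \ref{3} furnishes a single $N$ such that, for all $k \geq N$, the inequality $\|\overline{x_{k}-x}\|^{*}_{i_{1},\dots,i_{m}} < \frac{\epsilon}{2}$ holds for every tuple at once (the same $N$ serving all tuples is built into the definition of convergence). Substituting this into the triangle inequality above yields, for all $k, l \geq N$ and every tuple, $\|\overline{x_{k}-x_{l}}\|^{*}_{i_{1},\dots,i_{m}} < \epsilon$, which is precisely the Cauchy condition for the class-$m$ collection.

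I do not anticipate a substantive obstacle here: the only point requiring care is confirming that $\|\cdot\|^{*}_{i_{1},\dots,i_{m}}$ satisfies the triangle inequality, and this is immediate from its construction as a finite sum of class-$1$ norms, each of which inherits subadditivity directly from axiom (iv). As an alternative route, one could bypass the explicit estimate by appealing to Theorem \ref{6}: since convergence of class-$m$ is equivalent to convergence of class-$1$, and the Cauchy property transfers between classes by the same summation identity, it would suffice to treat the case $m=1$. The direct $\epsilon/2$ argument is cleaner and handles all $m$ uniformly, so I would present that.
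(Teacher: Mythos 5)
Your proposal is correct and follows essentially the same route as the paper: apply the triangle inequality for $\|\cdot\|^{*}_{i_{1},\dots,i_{m}}$ to $\overline{x_{k}-x_{l}} = \overline{x_{k}-x}+\overline{x-x_{l}}$ and combine it with the $\epsilon/2$ bound supplied by convergence. The only difference is that you explicitly justify the subadditivity of the class-$m$ norm from axiom (iv) of the $n$-norm, a step the paper takes for granted.
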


\begin{proof}
	Let an $m\in \mathbb{N}, 1\leq m \leq n$ and $\{x_{n}\}$ converges with respect to class-$m$ collection to $x$. 
Then we have for any $\epsilon >0$, there exists an $N\in \mathbb{N}$ such that, for every $k,l \geq N$ we have
	$$\|\overline{x_{k}-x_{l}}\|^{*}_{i_{1},\dots,i_{m}} \leq  \|\overline{x_{k}-x}\|^{*}_{i_{1},\dots,i_{m}} + 
\|\overline{x_{l}-x}\|^{*}_{i_{1},\dots,i_{m}} < \epsilon,$$
	for every  $i_{1},\dots,i_{m} \in \{1,\dots,n\}$ and $i_{1} < \dots < i_{m}$. This means $\{x_{n}\}$ is a Cauchy sequence.	
\end{proof}

Moreover, let $(X,\|\cdot,\dots,\cdot\|)$ be an $n$-normed spaces with class-$m$ collections for any $m\in \mathbb{N}, 1\leq m \leq n$.
Furthermore, all types of Cauchy sequence with respect to the norms of class-$m$ collection for any $m\in \mathbb{N}, 1 \leq m \leq n$
are equivalent. We state it in the following theorem.

\begin{theorem}
	Let $(X,\|\cdot,\dots,\cdot\|)$ be an $n$-normed spaces with class-$m$ collections for any $m\in \mathbb{N}, 1\leq m \leq n$. 
A sequence $\{x_{n}\} \subset X$ is Cauchy with respect to the norms of class-$1$ collection if and only if 
$x_{n}$ is Cauchy with respect to the norms of class-$m$ collection.
\end{theorem}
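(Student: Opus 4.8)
The plan is to mirror the proofs of Theorem~\ref{6} and Theorem~\ref{8}, since the defining relation
$$\|\overline{u}\|^{*}_{i_{1},\dots,i_{m}}=\|\overline{u}\|^{*}_{i_{1}}+\dots+\|\overline{u}\|^{*}_{i_{m}}$$
expresses every class-$m$ norm as a finite sum of class-$1$ norms, each of which is nonnegative by property (i) of the $n$-norm. The entire argument rests on this additive structure together with nonnegativity, so both directions reduce to the same elementary estimates used for convergence and boundedness, now applied to the difference $x_{k}-x_{l}$ rather than to $x_{k}-x$ or $x$.

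For the forward direction, I would assume $\{x_{n}\}$ is Cauchy with respect to the norms of class-$1$ collection. Given $\epsilon>0$, there is an $N$ so that for all $k,l\geq N$ one has $\|\overline{x_{k}-x_{l}}\|^{*}_{j}<\epsilon/m$ for every $j\in\{1,\dots,n\}$. Summing over any $m$ of these indices gives
$$\|\overline{x_{k}-x_{l}}\|^{*}_{i_{1},\dots,i_{m}}=\sum_{r=1}^{m}\|\overline{x_{k}-x_{l}}\|^{*}_{i_{r}}<m\cdot\frac{\epsilon}{m}=\epsilon,$$
for every choice $i_{1}<\dots<i_{m}$, which is exactly the Cauchy condition for class-$m$.

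For the converse, I would assume $\{x_{n}\}$ is Cauchy with respect to the norms of class-$m$ collection and fix $\epsilon>0$, obtaining $N$ with $\|\overline{x_{k}-x_{l}}\|^{*}_{i_{1},\dots,i_{m}}<\epsilon$ for all $k,l\geq N$ and all admissible index tuples. For a fixed $j\in\{1,\dots,n\}$, I choose any tuple $i_{1}<\dots<i_{m}$ containing $j$; since every class-$1$ norm appearing in the sum is nonnegative, the single term $\|\overline{x_{k}-x_{l}}\|^{*}_{j}$ is dominated by the full sum, whence $\|\overline{x_{k}-x_{l}}\|^{*}_{j}<\epsilon$. As $j$ was arbitrary, $\{x_{n}\}$ is Cauchy with respect to the norms of class-$1$ collection.

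I do not anticipate a genuine obstacle here: both implications follow immediately from nonnegativity and finite additivity, and the proof is structurally identical to that of Theorem~\ref{8}. The only point deserving a little care is the bookkeeping in the converse, namely ensuring that for each index $j$ one can actually select an increasing $m$-tuple $i_{1}<\dots<i_{m}$ containing it; this is always possible precisely because $1\leq m\leq n$, so the tuple is nonempty and there are enough indices in $\{1,\dots,n\}$ to complete it.
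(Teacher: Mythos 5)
Your proposal is correct and is exactly what the paper intends: the paper's own ``proof'' consists of the single remark that the argument is analogous to that of Theorem~\ref{6}, and your write-up is precisely that analogue (the $\epsilon/m$ splitting for the forward direction, nonnegativity of the class-$1$ norms to extract a single term from the sum for the converse, applied to $x_{k}-x_{l}$ in place of $x_{k}-x$). No gap; if anything, you have supplied more detail than the paper does.
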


\begin{proof}
The proof is analogous with the proof of Theorem (\ref{6}).
\end{proof}

Hereafter, we may simply use the word 'Cauchy' instead of 'Cauchy with respect to the norms of class-$m$ collection'. 
We will mention the type if we need to specify it explicitly. Moreover, if every Cauchy sequence in $X$ is convergent, 
then $X$ is complete.

\section{Remark and Conslusion}

We have observed some topological characteristics of the $n$-normed spaces. We studied closed sets, bounded sets, 
convergence and Cauchy sequences. We also studied the completeness of the $n$-normed spaces.~We reviewed these 
characteristics using norms of the quotient spaces we constructed from the $n$-normed spaces. Some researchers have 
used this approach.~They used the norms of class-$n$ collection to observe some characteristics of the $n$-normed spaces. 
Here, we provide more than one class collection that can be used to see the characteristics of the $n$-normed spaces. 
This means we use more general viewpoints. This allows us to present more general definitions, properties and theorems 
as part of the characteristics of the $n$-normed spaces. Using this viewpoint, we can investigate not only the 
topological characteristics, but also geometry and other aspects of the $n$-normed spaces.


\end{document}